\newtheorem{theorem}{Theorem}
\newtheorem{lemma}{Lemma}
\theoremstyle{definition}
\title{Multiscale stabilization for convection diffusion equations with heterogeneous velocity and diffusion coefficients}
\author{
Eric T. Chung \thanks{Department of Mathematics,
The Chinese University of Hong Kong (CUHK), Hong Kong SAR. Email: {\tt tschung@math.cuhk.edu.hk}.
The research of Eric Chung is supported by Hong Kong RGC General Research Fund (Project 14317516)
and CUHK Direct Grant for Research 2017-18.}
\and
Yalchin Efendiev \thanks{Department of Mathematics \& Institute for Scientific Computation (ISC),
Texas A\&M University,
College Station, Texas, USA. Email: {\tt efendiev@math.tamu.edu}.}
\and
Wing Tat Leung \thanks{The Center for Subsurface Modeling, The Institute for Computational Engineering and Sciences, The University of Texas at Austin, Austin, TX 78712}
}
\begin{document}

\maketitle
\begin{abstract}
We present a new stabilization technique for multiscale convection diffusion problems. Stabilization for these problems has been a challenging task,
especially for the case with high Peclet numbers. Our method is based on a constraint energy minimization idea and the discontinuous Petrov-Galerkin formulation. In particular, the test functions
are constructed by minimizing an appropriate energy subject to certain orthogonality conditions, and are related to the trial space.
The resulting test functions have a localization property,
and can therefore be computed locally. We will prove the stability, and present several numerical results.
Our numerical results confirm that our test space gives a good stability, in the sense that the solution error is close
to the best approximation error.
\end{abstract}

\section{Introduction}

In this paper, we consider a class of convection-diffusion problems in the form
\begin{equation}
\label{eq:pde}
-\nabla\cdot(\kappa\nabla u)+b\cdot\nabla u  =f
\end{equation}
with a high Peclet number, where $\kappa$ is a diffusion tensor and
$b$ is a velocity vector~\cite{park2004multiscale,
  fannjiang1994convection}.  We assume that both fields contain
multiscale spatial features with high contrast.  There are in literature a wide range of numerical schemes for this problem that are based on constructions of special basis functions on coarse
grids~\cite{dur91, weh02, Arbogast_two_scale_04, egw10, egh12, eh09,
  g1, g2, Review, Efendiev_GKiL_12, ehg04, Chu_Hou_MathComp_10,ee03,
  calo2011note, calo2014asymptotic, GhommemJCP2013, eglmsMSDG}.  These
methods include the Multiscale Finite Element Methods
(MsFEM)~\cite{egw10, egh12, eh09, ehg04, Ensemble, alotaibi2015global},
the Variational Multiscale Methods~\cite{hughes98,
  hughes1995multiscale, hsu2010improving, bazilevs2010isogeometric,
  masud2004multiscale, buffa2006analysis, hughes2005variational,
  codina1998comparison, bazilevs2007variational, akkerman2008role,
  hughes2007variational} and
the Generalized Multiscale Finite Element Method
(GMsFEM)
\cite{egh12, galvis2015generalized,Ensemble, eglmsMSDG,
  eglp13oversampling, calo2014multiscale, chung2014adaptive, chung2014adaptiveDG,
  randomized2014, chung2015generalizedperforated, chung2015residual,
  chung2015online}.
  When the above approaches are used to solve
  multiscale convection-dominated diffusion problems with a high Peclet number,
besides finding a reduced approximate solution space, one needs to stabilize the
system to avoid large errors~\cite{park2004multiscale}.
It is known that simplified stabilization techniques
do not suffice for
complex problems and one needs a systematic approach to generate the
necessary test spaces.

 In
this paper, we will derive and analyze a new stabilization technique,
which
combines recent developments in Constraint Energy Minimizing Generalized Multiscale Finite Element
Method (CEM-GMsFEM) \cite{chung2017constraint} and Discontinuous Petrov-Galerkin method
(e.g.,~\cite{demkowicz2014overview, niemi2011discontinuous,
  niemi2013automatically}).
  To motivate our method, we
  start with a stable
fine-scale finite element discretization that fully resolves all
scales of (\ref{eq:pde})
\begin{equation}
  \label{eq:discrete}
  Au = f.
\end{equation}
We will apply the discontinuous Petrov-Galerkin (DPG) techniques
following~\cite{demkowicz2013primal, chan2014robust,
  demkowicz2013robust, demkowicz2014overview, demkowicz2012class,
  zitelli2011class} to stabilize the system. In particular, we will rewrite the above system
 in a mixed framework using an auxiliary variable
as follows
\begin{align}
  \label{eq:mixed11}
  R w + A u &= f,\\
  A^T w\ \ \qquad&=0,   \label{eq:mixed12}
\end{align}
where the variable $w$ plays the role of the test function and the matrix $R$
is related to the norm in which we seek to achieve stability.  We
assume that the fine-scale system gives $w=0$, that is, it is
discretely stable.
The aim of this paper is to design a space for the variable $w$, given
a choice of the trial space for $u$.

Within the DPG framework, one can achieve stability by choosing
test functions $w$ with global support~\cite{barrett1984approximate,
  demkowicz1986adaptive}.
  The least squares approaches~\cite{bochev1998finite,
  hughes1989new, bochev2009least, fuchen16} can be used to
achieve stability in the natural norm.
We also note that a stabilization technique based on the variational multiscale method is presented in \cite{li2017error}.
  Our goal is to design procedures
for constructing test functions that are localizable and give good stability, that work well for large Peclet numbers.
To construct our test functions, we assume that a given set of local multiscale trial functions is available,
and that they satisfy a stable decomposition property. We note that these functions can be constructed by,
for example, using the GMsFEM approach (see e.g.~\cite{egh12}).
To find the test functions, we use the idea of CEM-GMsFEM \cite{chung2017constraint}.
First, we will construct an auxiliary space.
In particular, for each coarse cell, we solve a spectral problem, which is defined
based on the above mixed formulation (\ref{eq:mixed12}). The first few eigenfunctions contain
important features about the multiscale coefficients $\kappa$ and $b$, and are used in
the construction of our test functions.
Using these local eigenfunctions and the given local trial functions, we will find the required test functions
by minimizing an appropriate energy subject to some constraints.
We will show that the test functions are localizable
and that they give good stability of the resulting numerical scheme.

We will present some numerical results to show the performance.
We will show the performance of using various coarse grid sizes and various choices of oversampling layers.
We observe that, once a sufficient number of oversampling layers is used, the solution error is very close to the projection error,
which is the best approximation error in the trial space. This confirms that our test space provides
a good stability even for high Peclet number.

The paper is organized as follows.
In Section \ref{sec:prelim}, we give some basic notations and the formulation of our problem.
In Section \ref{sec:stable}, we present the construction of the test space,
and show that the space gives a good stability.
Numerical results to validate the theory will be presented in Section \ref{sec:numer}.
Finally, a conclusion is given.

\section{Preliminaries}
\label{sec:prelim}

We will give some basic notations in this section.
In this paper, we consider convection diffusion problems of the form
\begin{equation}
-\mbox{div}\big(\kappa(x)\,\nabla u\big)+b(x)\cdot\nabla u=f\quad\text{in}\quad\Omega,\label{eq:original}
\end{equation}
subject to the homogeneous Dirichlet boundary condition $u=0$ on
$\partial\Omega$, where $\Omega \subset \mathbb{R}^d$ is the computational domain and $f\in L^2(\Omega)$ is a given source.
We
assume that both $\kappa(x)$ and $b(x)$ are heterogeneous coefficients with multiple
scales and very high contrast, and that $b(x)$ is divergence free. We assume $\kappa_0 \leq \kappa \leq \kappa_1$
where $\kappa_1/\kappa_0$ is large.

We next introduce the notion of fine and
coarse grids. We let $\mathcal{T}^{H}$ be a usual conforming partition
of the computational domain $\Omega$ into $N$ finite elements (triangles,
quadrilaterals, tetrahedra, etc.). We refer to this partition as the
coarse grid and assume that each coarse element is partitioned into
a connected union of fine grid blocks. The fine grid partition will
be denoted by $\mathcal{T}^{h}$, and we assume that this
is a refinement
of the coarse grid $\mathcal{T}^{H}$. See Figure~\ref{fig:illustration} for an illustration.
We let $N$ be the number of coarse elements and $N_c$ be the number of coarse grid nodes.
We remark that our test functions are defined with respect to the coarse grid, and the fine grid is used to compute
the test functions numerically.

\begin{figure}[ht!]
\centering
\includegraphics[width=2.5in, height=2.5in]{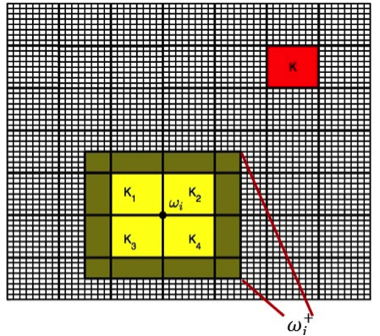}
\caption{Illustration of the coarse grid, fine grid, coarse neighborhood and  oversampling domain.}
\label{fig:illustration}
\end{figure}

Next, we give the precise formulation of our problem.
We define $V=H_{0}^{1}(\Omega)$
and
the bilinear form $a:V\times V\rightarrow\mathbb{R}$ by
\[
a(u,v)=\int_{\Omega}\Big( \kappa\nabla u\cdot\nabla v+v\,b\cdot\nabla u \Big).
\]
Then (\ref{eq:original}) can be formulated as: find $u\in V$ such that
\begin{equation}
a(u,v) = (f,v), \quad \forall v\in V
\end{equation}
where $(f,v)$ is the $L^2(\Omega)$ inner product. In order to define our stabilization approach,
we will formulate the above problem following the DPG idea as in \cite{demkowicz2013primal, chan2014robust,
  demkowicz2013robust, demkowicz2014overview, demkowicz2012class,
  zitelli2011class}.
First of all, we assume that a certain trial space $V_{ms}\subset V$ is used.
The trial space consists of local multiscale basis functions $q_i$ with support on local region $\omega_i$ for the $i$-th coarse vertex,
where the coarse neighborhood $\omega_i$ is the union of all coarse elements having the $i$-th vertex, see Figure~\ref{fig:illustration}.
We remark that one can use multiple basis functions per coarse region $\omega_i$.
In addition, we assume that the trial space satisfies
the following stable decomposition property:
\begin{equation}
\label{eq:decomp}
\Big( \sum_{i=1}^{N_c} \|u_i\|_V^2\Big)^{\frac{1}{2}} \leq C_s \|u\|_V, \quad u\in V_{ms}, \; u=\sum_{i=1}^{N_c} u_i, \; \text{supp}(u_i) \subset \omega_i
\end{equation}
where $\|\cdot\|_V$ is a suitable norm which will be defined.
Our main goal is to construct a suitable test space $W_{ms}$,
so that the resulting mixed problem has a good stability property.
For simplicity of our discussion, we assume that there is one basis per node.
Our theory can be generalized to the case that there are multiple trial basis per node.

We will now present the DPG formulation. For each coarse element $K_i \in\mathcal{T}^H$, we define the bilinear forms
\begin{eqnarray}
s^{(i)}(u,v) &=& \int_{K_{i}}\kappa\nabla u\cdot\nabla v \label{eq:s} \\
c^{(i)}(u,v) &=& \int_{K_{i}} \Big( \kappa^{-1} |b|^{2} u \, v+\tilde{\kappa} \, u \, v \Big) \label{eq:c}
\end{eqnarray}
where $\tilde{\kappa} = \kappa \sum_{j=1}^{N_c} |\nabla \chi_j|$ and $\{ \chi_j\}$ is a set of partition of unity functions corresponding to the coarse grid
where the index $j$ denotes $j$-th coarse vertex.
Using the above bilinear forms, we define the inner product
\begin{equation}
(u,v)_{V_i} = s^{(i)}(u,v) + c^{(i)}(u,v)
\end{equation}
with the associated norm $\|u\|_{V_i}^2 = s^{(i)}(u,u) + c^{(i)}(u,u)$ for the space $V(K_i)$,
where we define $V(S) := H^1(S)$ and $V_0(S) := H^1_0(S)$
for a given set $S$.
In addition, we define
$(u,v)_V = \sum_{i=1}^N (u,v)_{V_i}$, which is an inner product for $V$.
We further define
\begin{equation}
a^*(u,v) = a(v,u).
\end{equation}
We next define a linear operator $T: V\rightarrow V$ by
\begin{equation}
a^*(Tu,v) = (u,v)_V, \quad \forall v\in V.
\end{equation}
It is clear that $T$ is a bijective bounded linear operator and hence $T^{-1}$ is bounded.
Using $T^{-1}$, we define the bilinear form $r$ as
\begin{equation}
r(w,z)=(T^{-1}w, T^{-1}z)_{V}.
\end{equation}
Finally, we formulate the problem as:
find $(w,u)\in W_{ms}\times V_{ms}$
such that
\begin{align}
r(w,z)+a(u,z) & =(f,z), \quad \;\forall z\in W_{ms}, \label{eq:mixed1} \\
a^{*}(w,v) & =0,\quad \;\forall v\in V_{ms}. \label{eq:mixed2}
\end{align}

\section{Multiscale stabilization}
\label{sec:stable}

In this section, we will present our stabilization technique.
In Section \ref{sec:test}, we will give the construction of the test functions for a given choice of the trial space $V_{ms}$.
In Section \ref{sec:analysis}, we will give the stability analysis.

\subsection{Construction of test functions}\label{sec:test}

We will present the construction of the test functions. The idea is based on
the CEM-GMsFEM \cite{chung2017constraint}.
We will first define the auxiliary space.
For each coarse element
$K_{i} \in\mathcal{T}^H$, we consider the following eigenvalue problem: find $(\lambda_{j}^{(i)},\phi_{j}^{(i)})\in\mathbb{R}\times V(K_{i})$
such that
\begin{equation}
\label{eq:spectral}
s^{(i)}(\phi^{(i)}_j,v)=\lambda_{j}^{(i)}c^{(i)}(\phi_{j}^{(i)},v), \quad \;\forall v\in V(K_{i}).
\end{equation}
Assume that the eigenvalues are arranged in non-decreasing order, namely
$\lambda_{j}^{(i)}\leq\lambda_{j+1}^{(i)}$. For each $K_i$, we select the first $J_i$ eigenfunctions and define
the local auxiliary space $W_{aux}(K_{i})$ by
\[
W_{aux}(K_{i})=\text{span}\{\phi_{j}^{(i)}|1\leq j\leq J_{i}\}
\]
and the corresponding global auxiliary space $W_{aux}$ by $W_{aux}=\sum_{i}W_{aux}(K_{i})$.
We remark that $J_i$ is the number of small eigenvalues for the problem (\ref{eq:spectral})
and these eigenvalues typically depend on the contrast of the coefficients.

We next present the construction of the multiscale test basis functions. To do so,
we define an operator $\pi:V\rightarrow W_{aux}$ by
\[
\pi(u)=\sum_{i=1}^N\sum_{1\leq j\leq J_{i}}\cfrac{1}{\lambda_{j}^{(i)}}\cfrac{c^{(i)}(u,\phi_{j}^{(i)})}{c^{(i)}(\phi_{j}^{(i)},\phi_{j}^{(i)})} \, \phi_{j}^{(i)}, \quad \;\forall u\in V
\]
where the sum with the index $i$ denotes the sum over all coarse elements $K_i$.
Our multiscale test space $W_{ms}$ consists of two components $W_{ms}^1$ and $W_{ms}^2$. We will define these two spaces as follows,
and will analyze them in Section \ref{sec:analysis}.

Now, we give the definition of the space $W_{ms}^1$.
For a given coarse element $K_i$, we consider its oversampled region $K_i^+$, which is obtained by enlarging $K_i$ by a few coarse grid cells.
Then, for each $\phi^{(i)}_j$ in the auxiliary space $W_{aux}(K_i)$, we consider the following problem: find
 $\psi_{j,ms}^{(i)}\in V_0(K_{i}^{+})$ such that
\[
a^{*}(\psi_{j,ms}^{(i)},v)+c(\pi(\psi_{j,ms}^{(i)}),\pi(v))=c(\pi(\phi_{j}^{(i)}),\pi(v)), \quad \;\forall v\in V_0(K_{i}^{+})
\]
where the bilinear form $c = \sum_{i=1}^N c^{(i)}$.
The first component of test space is defined as
\begin{equation}\label{eq:testsp1}
W_{ms}^{1}=\text{span}\{\psi_{j,ms}^{(i)} \, | \, 1\leq j \leq J_i, \, 1\leq i \leq N \}.
\end{equation}
Next, for each trial basis function $q_{i}\in V_{ms}$ with support $\omega_{i}$ corresponding to the $i$-th coarse vertex,
we will define a local function $\xi_{i}\in V_{0}(\omega_{i})$ such
that
\begin{equation}
\label{eq:xi}
a(v,\xi_{i})=s(q_{i},v), \quad \forall v\in V_{0}(\omega_{i})
\end{equation}
and we define $\eta_{i,ms}\in V(\omega_{i}^{+})$ such that
\[
a^{*}(\eta_{i,ms},v)+c(\pi(\eta_{i,ms}),\pi(v))=(q_{i},v)_V-a(v,\xi_{i}), \quad \;\forall v\in V(\omega_{i}^{+})
\]
where $\omega_i^+$ is obtained by enlarging $\omega_i$ by a few coarse cells, see Figure~\ref{fig:illustration}.
The second component of test space is defined as
\begin{equation}\label{eq:testsp2}
W_{ms}^{2}=\text{span}\{\eta_{i,ms}+\xi_{i} \, | \, 1\leq i \leq N_c \}.
\end{equation}
Finally, our multiscale test space $W_{ms}$ is the sum of $W_{ms}^{1}$ and $W_{ms}^{2}$.

\subsection{Stability analysis}\label{sec:analysis}

We will analyze the stability in this section.
We first notice that the test functions defined in (\ref{eq:testsp1}) and (\ref{eq:testsp2}) have local supports. This is the result
of a localization property of a related space $W_{glo}$, which contains functions defined globally.
The space $W_{glo}$ also consists of two components $W_{glo}^1$ and $W_{glo}^2$.
To define the space $W_{glo}^1$, we find $\psi_j^{(i)} \in V$ such that
\[
a^{*}(\psi_{j}^{(i)},v)+c(\pi(\psi_{j}^{(i)}),v)=c(\phi_{j}^{(i)},\pi(v)), \quad \;\forall v\in V.
\]
Then we define
\begin{equation}\label{eq:testg1}
W_{glo}^{1}=\text{span}\{\psi_{j}^{(i)} \, | \, 1\leq j \leq J_i, \, 1\leq i \leq N \}.
\end{equation}
To define the space $W_{glo}^2$, we find
$\eta_{i}\in V$
such that
\[
a^{*}(\eta_{i},v)+c(\pi(\eta_{i}),\pi(v))=(q_{i},v)_V-a(v,\xi_{i}), \quad\;\forall v\in V
\]
where $\xi_i$ is defined in (\ref{eq:xi}). Then we define
\begin{equation}\label{eq:testg2}
W_{glo}^{2}=\text{span}\{\eta_{i}+\xi_{i} \, | \, 1\leq i \leq N_c \}.
\end{equation}
Finally, we define $W_{glo} = W_{glo}^1 + W_{glo}^2$. Before we discuss our stability results, we will
give a characterization of the space $W_{glo}^1$ in the following lemma.

\begin{lemma}\label{lem:W1}
Let $W_{glo}^1$ be the space defined in (\ref{eq:testg1}). Then, $u\in W_{glo}^1$ if and only if
there is $\phi \in W_{aux}$ such that
\begin{equation}
a^*(u,v) = c(\phi,v), \quad \forall v\in V.
\end{equation}
\end{lemma}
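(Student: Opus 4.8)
The plan is to prove both implications by first isolating the single structural fact that makes everything work, namely a \emph{self-adjointness} property of $\pi$ with respect to the bilinear form $c$. This is the device that reconciles the apparently asymmetric placement of $\pi$ in the definition of $\psi_j^{(i)}$, where it acts on the first argument of $c$ on the left but on the second argument on the right. Concretely, I would first establish
$$
c(w,\pi(v)) = c(\pi(w),v), \qquad \forall\, w,v\in V.
$$
This follows by substituting the definition of $\pi$ into $c(w,\pi(v))$: since each $\phi_j^{(i)}$ is supported in $K_i$ we have $c(w,\phi_j^{(i)})=c^{(i)}(w,\phi_j^{(i)})$, and using the $c^{(i)}$-orthogonality of the eigenfunctions one obtains
$$
c(w,\pi(v)) = \sum_{i}\sum_{1\le j\le J_i} \frac{1}{\lambda_j^{(i)}}\,\frac{c^{(i)}(w,\phi_j^{(i)})\,c^{(i)}(v,\phi_j^{(i)})}{c^{(i)}(\phi_j^{(i)},\phi_j^{(i)})},
$$
which is manifestly symmetric in $w$ and $v$. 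The same computation gives $\pi(\phi_j^{(i)}) = (\lambda_j^{(i)})^{-1}\phi_j^{(i)}$, so $\pi$ restricted to $W_{aux}$ is diagonal in the eigenbasis with nonzero entries, hence a bijection of $W_{aux}$ onto itself. These are the only properties of $\pi$ I will use.

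For the forward implication, I would take $u=\sum_{i,j}\alpha_j^{(i)}\psi_j^{(i)}\in W_{glo}^1$ and rewrite the defining relation of each $\psi_j^{(i)}$. Applying the identity to its right-hand side, $c(\phi_j^{(i)},\pi(v))=c(\pi(\phi_j^{(i)}),v)$, the defining equation becomes
$$
a^*(\psi_j^{(i)},v) = c\big(\pi(\phi_j^{(i)})-\pi(\psi_j^{(i)}),\,v\big), \qquad \forall\, v\in V.
$$
Since $\pi$ maps $V$ into $W_{aux}$, the argument $\pi(\phi_j^{(i)})-\pi(\psi_j^{(i)})$ lies in $W_{aux}$. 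Summing, $u$ satisfies $a^*(u,v)=c(\phi,v)$ for all $v\in V$ with $\phi=\sum_{i,j}\alpha_j^{(i)}\big(\pi(\phi_j^{(i)})-\pi(\psi_j^{(i)})\big)\in W_{aux}$, which is exactly the claimed characterization.

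For the converse, suppose $\phi\in W_{aux}$ satisfies $a^*(u,v)=c(\phi,v)$ for all $v\in V$. The idea is to produce the member of $W_{glo}^1$ that solves the same problem as $u$ and then invoke uniqueness. Since $\phi+\pi(u)\in W_{aux}$ and $\pi$ is invertible on $W_{aux}$, I can choose $\phi'\in W_{aux}$ with $\pi(\phi')=\phi+\pi(u)$; let $\psi\in W_{glo}^1$ be the function built from $\phi'$ by the construction of $W_{glo}^1$, which is legitimate because $W_{glo}^1$ is the image of the linear solution operator $\phi'\mapsto\psi$. Using the self-adjointness identity again,
$$
a^*(\psi,v)+c(\pi(\psi),v)=c(\phi',\pi(v))=c(\pi(\phi'),v)=c(\phi,v)+c(\pi(u),v)=a^*(u,v)+c(\pi(u),v)
$$
for all $v\in V$, so $\psi$ and $u$ solve the same variational problem. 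The form $B(\cdot,\cdot)=a^*(\cdot,\cdot)+c(\pi(\cdot),\cdot)$ is coercive on $V=H^1_0(\Omega)$, because $a^*(w,w)=a(w,w)=\int_\Omega\kappa|\nabla w|^2$ (using that $b$ is divergence free and $w$ vanishes on $\partial\Omega$) while $c(\pi(w),w)$ is a sum of squares with positive coefficients and hence nonnegative; thus $B(\psi-u,v)=0$ for all $v$ forces $\psi=u$, and $u=\psi\in W_{glo}^1$.

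The main obstacle is the first step: recognizing and proving the $c$-self-adjointness of $\pi$, together with its explicit diagonal action and invertibility on $W_{aux}$. Once the two arguments of $c$ can be interchanged freely, both directions reduce to bookkeeping. A secondary point needing care is the well-posedness invoked in the converse; instead of coercivity of $B$ one may alternatively finish by a dimension count, combining the inclusion $W_{glo}^1\subseteq\{u:\,a^*(u,\cdot)=c(\phi,\cdot),\ \phi\in W_{aux}\}$ from the forward step with the fact that both spaces have dimension $\sum_i J_i$ (the solution operators being injective), which forces equality.
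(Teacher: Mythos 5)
Your proof is correct (granting, as the paper implicitly does, that all selected eigenvalues $\lambda_j^{(i)}$ are positive and that $c^{(i)}(\phi_j^{(i)},\phi_j^{(i)})>0$, without which the operator $\pi$ itself is ill-defined), but your converse direction follows a genuinely different route from the paper's. The paper introduces two solution operators $L_1,L_2:W_{aux}\rightarrow V$ --- $L_1$ the constrained solve that generates $W_{glo}^1$, and $L_2$ the unconstrained solve $a^*(L_2(\phi),w)=c(\phi,w)$ --- proves the inclusion $L_1(W_{aux})\subseteq L_2(W_{aux})$ by the same rearrangement you use (your forward direction is exactly this step, made honest by your self-adjointness identity $c(w,\pi(v))=c(\pi(w),v)$, which the paper never states but needs in order to reconcile the mismatched placements of $\pi$ in (\ref{eq:testg1}) versus its operator $L_1$), and then concludes equality by rank--nullity: $\ker L_1=\ker L_2=\widehat{W}_{aux}$, so both images have dimension $\dim W_{aux}-\dim\widehat{W}_{aux}$, and the inclusion must be an equality. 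Your converse is instead constructive: you invert $\pi$ on $W_{aux}$ to produce $\phi'$ with $\pi(\phi')=\phi+\pi(u)$, take the element $\psi\in W_{glo}^1$ generated by $\phi'$, and force $\psi=u$ by positivity of the form $a^*(\cdot,\cdot)+c(\pi(\cdot),\cdot)$ (using that $b$ is divergence free and $V=H_0^1(\Omega)$). What your route buys: an explicit preimage, no dimension counting, and an argument that works verbatim with the definition (\ref{eq:testg1}) as literally written. What the paper's route buys: it needs neither invertibility of $\pi$ on $W_{aux}$ nor any coercivity, and its $\widehat{W}_{aux}$ bookkeeping is built to tolerate a degenerate $c$-form on $W_{aux}$ --- a case in which your sum-of-squares and bijectivity claims (and indeed the definition of $\pi$ itself) would need repair. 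Your suggested alternative finish by dimension count, using injectivity of the two solution operators, is essentially the paper's argument specialized to the nondegenerate case.
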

\begin{proof}
First, we will define operators $L_1 : W_{aux} \rightarrow V$ and $L_2 : W_{aux} \rightarrow V$ such that
for a given $v\in W_{aux}$, the images $L_1(v)$ and $L_2(v)$ are defined by solving the following equations
\begin{eqnarray*}
a^*(L_1(v),w) + c(\pi (L_1(v)),\pi(w)) &=&  c(v,w), \quad \forall w\in V, \\
a^*(L_2(v),w) &=&  c(v,w), \quad \forall w\in V.
\end{eqnarray*}
It is clear that $L_1(W_{aux})\subset L_2(W_{aux})$, since for any $v\in W_{aux}$, we have
\[
a^*(L_1(v),w) = c(v-\pi (L_1(v)),w),\quad \forall w\in V,
\]
and therefore $L_1(v)=L_2 (v-\pi (L_1(v)))$.

Next, we define the space $\widehat{W}_{aux}$ as $\widehat{W}_{aux}= \{ v\in W_{aux}|\; c(v,w)= 0 \text{ for all } w\in V \}$. The space $\widetilde{W}_{aux}$ is then defined as the orthogonal complement of $\widehat{W}_{aux}$ with respect to the $c$-inner product such that $W_{aux}=\widehat{W}_{aux}\oplus\widetilde{W}_{aux}$.
Since $W_{aux}$ is a finite dimensional vector space, we have
\[
dim(L_i(W_{aux})) + dim(ker(L_i)) = dim(W_{aux}), \quad i=1,2.
\]
Since $ker(L_i)=\widehat{W}_{aux}$ for $i=1,2$, we have
\[
dim(L_i(W_{aux}))=dim(W_{aux})-dim(\widehat{W}_{aux})=dim(\widetilde{W}_{aux}), \quad i=1,2,
\]
 which implies $dim(L_1(W_{aux})) = dim(L_2(W_{aux}))$.
Hence, we obtain $L_1(W_{aux})=L_2(W_{aux})$. This completes the proof.

\end{proof}

Next, we define a norm
\begin{align*}
\|w\|_{W}  =\sup_{v\in V} \cfrac{a(v,w)}{\|v\|_{V}}.
\end{align*}
\begin{lemma}
For all $w\in V$, we have
\begin{equation}
\|w\|_W=\sqrt{ r(w,w)}.
\end{equation}
\end{lemma}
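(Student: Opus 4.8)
The plan is to unwind every definition and reduce the claimed identity to the elementary fact that the dual-norm supremum of a Riesz representative equals the norm of that representative. The whole statement is really just the assertion that $T^{-1}$ realizes the Riesz map between the pairing $a(\cdot,\cdot)$ and the inner product $(\cdot,\cdot)_V$, so once the operator $T$ is correctly inserted the result falls out in one line.

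First I would rewrite the numerator $a(v,w)$ in terms of the data defining $T$. Since $a^*(u,v)=a(v,u)$ by definition, we have $a(v,w)=a^*(w,v)$. Because $T$ is bijective, the element $g:=T^{-1}w$ is well defined, and the defining relation $a^*(Tg,v)=(g,v)_V$ (valid for all $v\in V$) becomes
\[
a^*(w,v)=(T^{-1}w,v)_V,\qquad \forall v\in V.
\]
Combining the two displays gives $a(v,w)=(T^{-1}w,v)_V$ for every $v\in V$.

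Next I would substitute this into the definition of $\|w\|_W$, obtaining
\[
\|w\|_W=\sup_{v\in V}\frac{(T^{-1}w,v)_V}{\|v\|_V}.
\]
The right-hand side is the norm of the bounded linear functional $v\mapsto(T^{-1}w,v)_V$ on the Hilbert space $(V,(\cdot,\cdot)_V)$. By Cauchy--Schwarz it is bounded above by $\|T^{-1}w\|_V$, and the choice $v=T^{-1}w$ (when $T^{-1}w\neq 0$; the case $T^{-1}w=0$ makes both sides zero) shows the bound is attained. Hence $\|w\|_W=\|T^{-1}w\|_V$.

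Finally, recalling that $r(w,w)=(T^{-1}w,T^{-1}w)_V=\|T^{-1}w\|_V^2$, we conclude $\|w\|_W=\sqrt{r(w,w)}$. I do not anticipate a genuine obstacle here; the one point requiring care is keeping the adjoint convention straight, so that passing through $T$ converts the (nonsymmetric) form $a(v,w)$ into the symmetric inner product $(\cdot,\cdot)_V$ against which the supremum is then evaluated.
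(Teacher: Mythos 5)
Your proof is correct and follows essentially the same route as the paper: both rewrite $a(v,w)=(T^{-1}w,v)_V$ via the definition of $T$, then identify the supremum as the norm of the Riesz representative $T^{-1}w$ (Cauchy--Schwarz for the upper bound, the choice $v=T^{-1}w$ for attainment), and conclude from $r(w,w)=\|T^{-1}w\|_V^2$. Your explicit handling of the degenerate case $T^{-1}w=0$ is a minor tidiness the paper omits, but the argument is the same.
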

\begin{proof}
By the definition of $T$, we have
\[
a(v,Tu) = (u,v)_V,\;\forall v\in V.
\]
Thus, by considering $w=Tu$ , we have $a(v,w) = (T^{-1}w,v)_V$.
Therefore, for all $w\in V$, we have
\[
\|T^{-1}w\|^2_V = a(T^{-1}w,w) \leq \|w\|_W \|T^{-1}w\|_V
\]
and
\[
\|w\|_W = \sup_{v\in V} \cfrac{a(v,w)}{\|v\|_{V}} =\sup_{v\in V} \cfrac{(T^{-1}w,v)_V}{\|v\|_{V}} \leq \|T^{-1}w\|_V.
\]
Therefore, we have $\sqrt{r(w,w)}=\|T^{-1}w\|_V=\|w\|_W$.
\end{proof}

Our first result regarding stability is Lemma \ref{lem:glo}.
We consider the problem: find $(w,u)\in W_{glo}\times V_{ms}$
such that
\begin{align*}
r(w,z)+a(u,z) & =(f,z), \quad \;\forall z\in W_{glo},\\
a^{*}(w,v) & =0,\quad \;\forall v\in V_{ms}.
\end{align*}
The following lemma shows that the test space $W_{glo}$ gives perfect stability.

\begin{lemma}\label{lem:glo}
For all $v\in V_{ms}$, there exist a unique $w\in W_{glo}$ such
that
\begin{equation}\label{eq:infsup1}
(u,v)_V=a(u,w), \quad\;\forall u\in V.
\end{equation}
 Therefore we have
\[
\inf_{v\in V_{ms}}\sup_{z\in W_{glo}}\cfrac{a(v,z)}{\|v\|_{V}\|z\|_{W}}=1.
\]
\end{lemma}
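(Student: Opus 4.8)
The plan is to read the asserted identity as a Riesz-type representation with respect to the nonsymmetric form $a$ and to show that the representative automatically lies in $W_{glo}$. First I would settle existence and uniqueness in the whole space $V$: the functional $u\mapsto(u,v)_V$ is bounded and linear, and the defining relation $a^{*}(Tu,v)=(u,v)_V$ of the operator $T$ shows that $w=Tv$ satisfies $a(u,w)=a^{*}(w,u)=(u,v)_V$ for all $u\in V$. Since $T$ is a bijection (so that $a^{*}(z,\cdot)=0$ forces $z=0$), this $w$ is the \emph{unique} element of $V$, and a fortiori the unique element of $W_{glo}\subset V$, with the required property. Hence the entire difficulty is the membership claim $Tv\in W_{glo}$.

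By linearity it suffices to treat a single trial basis function $v=q_i$. The guiding idea is that the $W_{glo}^2$ generator $\eta_i+\xi_i$ already reproduces the bulk of the functional $a^{*}(Tq_i,\cdot)=(q_i,\cdot)_V$. Indeed, using $a(u,\xi_i)=a^{*}(\xi_i,u)$ together with the defining equation of $\eta_i$, a direct computation gives
\[
a^{*}(\eta_i+\xi_i,u)=(q_i,u)_V-c(\pi(\eta_i),\pi(u)),\qquad\forall u\in V,
\]
so $\eta_i+\xi_i$ agrees with $Tq_i$ up to the residual functional $u\mapsto c(\pi(\eta_i),\pi(u))$.

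The crux is to absorb this residual into $W_{glo}^1$. Because $\pi$ is symmetric with respect to $c$, that is $c(\pi x,y)=c(x,\pi y)$, and has range in $W_{aux}$, the residual can be rewritten as $c(\phi,u)$ for a suitable $\phi\in W_{aux}$, namely $\phi=\pi(\pi(\eta_i))$. I would then invoke Lemma \ref{lem:W1}, which characterizes $W_{glo}^1$ exactly as the set of $w_1\in V$ with $a^{*}(w_1,u)=c(\phi,u)$ for some $\phi\in W_{aux}$; this produces $w_1\in W_{glo}^1$ with $a^{*}(w_1,u)=c(\pi(\eta_i),\pi(u))$. Adding, the element $w_1+(\eta_i+\xi_i)\in W_{glo}$ satisfies $a^{*}(w_1+\eta_i+\xi_i,u)=(q_i,u)_V$, so by the uniqueness established above it coincides with $Tq_i$, giving $Tq_i\in W_{glo}$. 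I expect this residual-absorption step to be the main obstacle, since it is precisely where the interaction between $\pi$, the auxiliary space $W_{aux}$, and the characterization of Lemma \ref{lem:W1} must be matched exactly.

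Finally, the inf-sup value follows cheaply from the representation. For fixed $v\in V_{ms}$ choose the test function $z=w$ just constructed; taking $u=v$ in the identity gives $a(v,w)=(v,v)_V=\|v\|_V^2$, while $\|w\|_W=\sup_{u\in V}a(u,w)/\|u\|_V=\sup_{u\in V}(u,v)_V/\|u\|_V=\|v\|_V$ by Cauchy--Schwarz. Hence the quotient equals $1$ for this choice, so the supremum is at least $1$; conversely, the definition of $\|\cdot\|_W$ yields $a(v,z)\le\|v\|_V\|z\|_W$ for every $z\in W_{glo}$, so the supremum never exceeds $1$. Thus the supremum equals $1$ for each $v\in V_{ms}$, and the infimum over $V_{ms}$ is $1$ as claimed.
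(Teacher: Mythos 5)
Your proof is correct and follows essentially the same route as the paper's: the $W_{glo}^2$ generator $\eta_i+\xi_i$ reproduces $(q_i,\cdot)_V$ up to a residual lying in the range of $\pi$, which is then absorbed by an element of $W_{glo}^1$ via Lemma~\ref{lem:W1}, and the inf-sup value follows directly from the resulting representation. Your write-up is in fact slightly more careful in three spots the paper glosses over: you supply the uniqueness argument (via bijectivity of $T$), you state the upper bound $a(v,z)\le\|v\|_V\|z\|_W$ explicitly, and your use of the $c$-symmetry of $\pi$ to rewrite the residual $c(\pi(\eta_i),\pi(v))$ as $c(\pi(\pi(\eta_i)),v)$ reconciles the defining equation of $\eta_i$ with the hypothesis of Lemma~\ref{lem:W1}, a step the paper elides by writing the residual as $c(\pi(\eta_i),v)$.
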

\begin{proof}
By definition of $W_{glo}^{2}$, for each $q_i\in V_{ms}$, there exist
a function $\eta_{i}+\xi_{i}\in W_{glo}^{2}$ such that
\[
a^{*}(\eta_{i}+\xi_{i},v)=(q,v)_{V}-c(\pi(\eta_{i}),v), \quad \forall v\in V.
\]
 On the other hand, by Lemma \ref{lem:W1},
 there exist a function $w\in W_{glo}^{1}$ such that
\[
a^{*}(w,v)=c(\pi(\eta_{i}),v), \quad\forall v\in V.
\]
Therefore we have
\[
a^{*}(\eta_{i}+\xi_{i}+w,v)=(q,v)_{V}, \quad\forall v\in V.
\]
This completes the proof for (\ref{eq:infsup1}). To show the second part, we note that (\ref{eq:infsup1})
implies that for every $v\in V_{ms}$, there is $w\in W_{glo}$ such that
\begin{equation*}
\|v\|_V^2 = a(v,w)
\end{equation*}
and that $\|w\|_W = \|v\|_V$. This shows the second part of the lemma.
\end{proof}

We next prove a localization result for our test functions. In particular, we will prove a localization property
for functions in $W_{glo}^1$.
First, we need some notations for the oversampling domain and the cutoff function with respect to these oversampling domains.
For each $K_i$, we recall that $K_{i,m} \subset \Omega$ is the oversampling coarse region by enlarging
$K_{i}$ by $m$ coarse grid layers. For $M>m$, we define $\chi_{i}^{M,m}\in\text{span}\{\chi^{ms}_{j}\}$
such that $0 \leq \chi_i^{M,m} \leq 1$ and
\begin{align}
\chi_{i}^{M,m} & =1\text{ in }K_{i,m}, \label{cutoff1} \\
\chi_{i}^{M,m} & =0\text{ in }\Omega\backslash K_{i,M}. \label{cutoff2}
\end{align}
Note that, we have $K_{i,m} \subset K_{i,M}$. Moreover,
$\chi_i^{M,m}=1$ on the inner region $K_{i,m}$
and $\chi_i^{M,m}=0$ outside the outer region $K_{i,M}$.

\begin{lemma}
\label{lem:decay}
Let $S$ be a given coarse region and let $S_l$ be an oversampling region obtained by enlarging $S$ by $l$ coarse grid layers, where $l \geq 2$.
Let $w_{glo} \in V$ be the solution of
\begin{equation*}
a^{*}(w_{glo},v)+c(\pi(w_{glo}),\pi(v))=L(v), \quad \forall v\in V,
\end{equation*}
where $L(v)$ is a linear functional such that
$L(v)=0,\;\forall v\in V_0(\Omega\backslash S)$. In addition, we let $w_{ms}\in V_0(S_{l})$ be the solution of
\begin{equation*}
a^{*}(w_{ms},v)+c(\pi(w_{ms}),\pi(v))=L(v), \quad\forall v\in V_0(S_{l}).
\end{equation*}
Then, we have
\[
\|w_{glo}-w_{ms}\|_{V}^{2}\leq C\Big(1+\Lambda^{-1}\Big) \Big(1+C^{-1}(1+\Lambda^{-1})^{-1}\Big)^{-(l-1)}
\|w_{glo}\|_{V}^{2},
\]
where $\Lambda = \min_{1\leq i \leq N} \lambda^{(i)}_{J_i+1}$.
\end{lemma}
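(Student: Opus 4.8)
The plan is to establish exponential decay of the global solution $w_{glo}$ away from the source region $S$, then use this decay to control the difference $w_{glo} - w_{ms}$ via a cutoff-function argument. The key structural feature I would exploit is the coercivity of the bilinear form $a^*(\cdot,\cdot) + c(\pi(\cdot),\pi(\cdot))$ with respect to the norm $\|\cdot\|_V$, together with the spectral gap encoded in $\Lambda$. Let me outline the main steps.

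First I would fix the cutoff function $\chi = \chi_i^{M,m}$ from \eqref{cutoff1}--\eqref{cutoff2} with $M = l$ and $m = l-1$, so that $\chi = 1$ on $S_{l-1}$ and $\chi = 0$ outside $S_l$. The natural test function is $v = (1-\chi) w_{glo}$, which vanishes on $S$ (where $L$ is supported since $L(v)=0$ for $v \in V_0(\Omega\backslash S)$ means $L$ acts only near $S$) and vanishes outside $S_l$. I would subtract the two defining equations for $w_{glo}$ and $w_{ms}$ and test against an appropriate function supported in the annular region $S_l \backslash S_{l-1}$. Since both $w_{glo}$ and $w_{ms}$ satisfy the same equation with right-hand side $L$, and $L$ is supported in $S$, the difference $e = w_{glo} - w_{ms}$ satisfies a homogeneous equation away from $S$, and the error is driven entirely by the energy of $w_{glo}$ in the transition layer.

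Next, the heart of the argument is the decay estimate. I would show that the energy of $w_{glo}$ in $\Omega \backslash S_j$ contracts by a fixed factor each time $j$ increases by one coarse layer. The mechanism is: test the global equation for $w_{glo}$ against $(1-\chi)^2 w_{glo}$ (or a similar quadratic cutoff), use the product rule to split off the commutator terms involving $\nabla \chi$, and bound these using the definition of $\tilde\kappa = \kappa \sum_j |\nabla \chi_j|$ appearing in $c^{(i)}$ in \eqref{eq:c}. Crucially, the projection $\pi$ and the spectral problem \eqref{eq:spectral} give the bound $c(u - \pi(u), u - \pi(u)) \le \Lambda^{-1} s(u,u)$ for the orthogonal part, which is where the factor $(1 + \Lambda^{-1})$ enters. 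Iterating the one-layer contraction across $l-1$ layers produces the geometric factor $(1 + C^{-1}(1+\Lambda^{-1})^{-1})^{-(l-1)}$, and bounding $e$ by the tail energy of $w_{glo}$ gives the stated estimate.

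The main obstacle I anticipate is controlling the convective cross term $v\, b\cdot\nabla u$ inside $a^*$, since the bilinear form $a$ is nonsymmetric and $a^*$ is its adjoint. When I insert the cutoff $\chi$ and expand, the convection contributes a term $\int w_{glo}\, b\cdot\nabla\chi \, w_{glo}$ that must be absorbed. This is precisely where the choice of the $c^{(i)}$ inner product pays off: the term $\kappa^{-1}|b|^2 u v$ in \eqref{eq:c} is designed so that $|\int w\, b\cdot\nabla\chi\, w| \le \|w\|_{L^2(\kappa^{-1}|b|^2)} \cdot \|w\nabla\chi\|_{L^2(\kappa)}$ via Cauchy--Schwarz, and both factors are controlled by $\|w\|_V$ through the $\tilde\kappa$-weighted term. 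Making this absorption rigorous while keeping the contraction constant independent of the Peclet number is the delicate point; I would isolate it as the key inequality and verify that the divergence-free assumption on $b$ removes the otherwise problematic zeroth-order convective contribution after integration by parts.
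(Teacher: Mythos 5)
Your proposal follows essentially the same route as the paper's proof: Galerkin orthogonality for $w_{glo}-w_{ms}$ on $V_0(S_l)$ combined with the cutoff test function $(1-\chi^{l,l-1})w_{glo}$, the spectral-gap inequality $\|(I-\pi)u\|_{c}^2 \le \Lambda^{-1}\|u\|_{s}^2$ producing the $(1+\Lambda^{-1})$ factors, a layer-by-layer contraction of the tail energy $\|w_{glo}\|_{V(\Omega\backslash S_k)}$ iterated $l-1$ times to obtain the geometric factor, and absorption of the convective commutator $\int w_{glo}\, b\cdot\nabla\chi\, w_{glo}$ via the $\kappa^{-1}|b|^2$ and $\tilde\kappa$ weights in $c^{(i)}$ together with the divergence-free skew-symmetrization. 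The only deviations (testing with $(1-\chi)^2 w_{glo}$ rather than $(1-\chi)w_{glo}$, and phrasing the contraction as decay of $w_{glo}$ alone before bounding the error) are cosmetic, so the plan is sound and matches the paper.
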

\begin{proof}
Using the definitions of $w_{glo}$ and $w_{ms}$, we have
\[
a^{*}(w_{glo}-w_{ms},v)+c(\pi(w_{glo}-w_{ms}),\pi(v))=0
\]
 for all $v\in V_0(S_{l})$. Therefore, for all $v\in V_0(S_{l})$, we have
\begin{align*}
\|w_{glo}-w_{ms}\|_{V}^{2} & =\Big(a^{*}(w_{glo}-w_{ms},w_{glo}-w_{ms})+c(\pi(w_{glo}-w_{ms}),\pi(w_{glo}-w_{ms}))\Big)\\
 & =\Big(a^{*}(w_{glo}-w_{ms},w_{glo}-v)+c(\pi(w_{glo}-w_{ms}),\pi(w_{glo}-v))\Big)
\end{align*}
since $b(x)$ is divergence free.
Choosing $v=\chi^{l,l-1}w_{glo}$, we have
\begin{align*}
&\: a^{*}(w_{glo}-w_{ms},(1-\chi_{i}^{l,l-1})w_{glo}) \\
 = &\: \int\kappa\nabla(w_{glo}-w_{ms})\cdot\nabla((1-\chi^{l,l-1})w_{glo})+\int(w_{glo}-w_{ms})b\cdot\nabla((1-\chi^{l,l-1})w_{glo})\\
  \leq &\: \Big(\|w_{glo}-w_{ms}\|_{s(\Omega\backslash S_{l-1})}+\|w_{glo}-w_{ms}\|_{c(\Omega\backslash S_{l-1})}\Big) \, \|(1-\chi^{l,l-1})w_{glo}\|_{s(\Omega\backslash S_{l-1})} \\
  \leq &\: C\Big(\|w_{glo}-w_{ms}\|_{s(\Omega\backslash S_{l-1})}+\|w_{glo}-w_{ms}\|_{c(\Omega\backslash S_{l-1})}\Big) \, \Big(\|w_{glo}\|_{s(\Omega\backslash S_{l-1})}+\|\tilde{\kappa}^{\frac{1}{2}}w_{glo}\|_{L^{2}(\Omega\backslash S_{l-1})}\Big)
\end{align*}
and
\begin{align*}
\|\pi((1-\chi^{(l,l-1)})w_{glo})\|_{c(\Omega\backslash S_{l-1})}^{2} & \leq\|(1-\chi^{(l,l-1)})w_{glo}\|_{c(\Omega\backslash S_{l-1})}^{2}\\
 & =\|w_{glo}\|_{c(\Omega\backslash S_{l})}^{2} + \sum_{K_{i}\subset(S_{l}\backslash S_{i-1})}c^{(i)}((1-\chi^{(l,l-1)})w_{glo},(1-\chi^{(l,l-1)})w_{glo}).
\end{align*}
Next, we will estimate the term $c^{(i)}((1-\chi^{(l,l-1)})w_{glo},(1-\chi^{(l,l-1)})w_{glo}).$
By definition, we have
\begin{align*}
c^{(i)}((1-\chi^{(l,l-1)})w_{glo},(1-\chi^{(l,l-1)})w_{glo}) & =\int_{K_{i}}(\cfrac{|b|^{2}}{\kappa}+\tilde{\kappa})|(1-\chi^{(l,l-1)})w_{glo}|^{2}\\
 & \leq\int_{K_{i}}(\cfrac{|b|^{2}}{\kappa}+\tilde{\kappa})|w_{glo}|^{2}=c^{(i)}(w_{glo},w_{glo}).
\end{align*}
Thus we obtain
\[
\|w_{glo}-w_{ms}\|_{V}^{2}\leq C\Big(\|w_{glo}-w_{ms}\|_{s(\Omega\backslash S_{l-1})}+\|w_{glo}-w_{ms}\|_{c(\Omega\backslash S_{l-1})}\Big) \, \Big(\|w_{glo}\|_{c(\Omega\backslash S_{l-1})}+\|w_{glo}\|_{s(\Omega\backslash S_{l-1})}\Big).
\]
Next we will estimate the terms $\|w_{glo}\|_{c(\Omega\backslash S_{l-1})}$
and $\|w_{glo}-w_{ms}\|_{c(\Omega\backslash S_{l-1})}$.
We will divide the proof in $4$ steps.

{\bf Step 1}:
For a given
$u\in V$ and $K_i\in\mathcal{T}_{H}$, we have
\begin{align*}
\|u\|_{c(K_{i})}^{2} & =\|\pi u\|_{c(K_{i})}^{2}+\|(I-\pi)u\|_{c(K_{i})}^{2}\\
 & \leq\|\pi u\|_{c(K_{i})}^{2}+\cfrac{1}{\Lambda}\|u\|_{s(K_{i})}^{2}\\
 & \leq(1+\Lambda^{-1})\|u\|_{V(K_{i})}^{2}.
\end{align*}
Thus, we have
\begin{equation}
\label{eq:analysis1}
\|w_{glo}-w_{ms}\|_{V}\leq C(1+\Lambda^{-1})^{\frac{1}{2}}\|w_{glo}\|_{V(\Omega\backslash S_{l-1})}.
\end{equation}

{\bf Step 2}:
In this step, we will prove $\|w_{glo}\|_{V(\Omega\backslash S_{k})}\leq C\|w_{glo}\|_{V(S_{k}\backslash S_{k-1})}$.
By direct computations, we have
\[
\int_{\Omega}\kappa\nabla w_{glo}\cdot\nabla(1-\chi_{i})w_{glo}=\|w_{glo}\|_{s(\Omega\backslash S_{k})}^{2}+\int_{S_{k}\backslash S_{k-1}}\kappa\nabla w_{glo}\cdot\nabla(1-\chi_{i})w_{glo}
\]
and
\begin{align*}
\int_{\Omega} b\cdot\nabla w_{glo}(1-\chi_{i})w_{glo} & =\cfrac{1}{2}\int_{\Omega\backslash S_{l-1}}(b\cdot\nabla w_{glo})(1-\chi_{i})w_{glo}-\cfrac{1}{2}\int_{\Omega\backslash S_{l-1}} b\cdot\nabla((1-\chi_{i})w_{glo})w_{glo}
\end{align*}
and
\begin{align*}
c(\pi w_{glo},\pi((1-\chi_{i})w_{glo})) & =\|\pi(w_{glo})\|_{c(\Omega\backslash S_{k-1})}^{2}+\sum_{K_{i}\subset S_{k}\backslash S_{k-1}}c^{(i)}(\pi w_{glo},\pi\Big((1-\chi^{(k,k-1)})w_{glo}\Big)).
\end{align*}
Using the above equations, we have
\begin{eqnarray*}
\|w_{glo}\|_{V(\Omega\backslash S_{k})}^{2} & = & -\int_{S_{k}\backslash S_{k-1}}\kappa\nabla w_{glo}\cdot\nabla(1-\chi_{i})w_{glo}\\
 &  & -\cfrac{1}{2}\Big(\int_{S_{l}\backslash S_{l-1}} b\cdot\nabla w_{glo}(1-\chi^{(k,k-1)})w_{glo}-\int_{S_{l}\backslash S_{l-1}} b\cdot\nabla\Big((1-\chi^{(k,k-1)})w_{glo}\Big)w_{glo}\Big)\\
 &  & -\sum_{K_{i}\subset S_{k}\backslash S_{k-1}}c^{(i)}(\pi w_{glo},\pi\Big((1-\chi^{(k,k-1)})w_{glo}\Big)) \\
 &=& T_1 + T_2 + T_3.
\end{eqnarray*}
Next, we will estimate the term $T_1$.
Clearly,
\begin{align*}
\int_{S_{k}\backslash S_{k-1}}\kappa\nabla w_{glo}\cdot\nabla(1-\chi^{(k,k-1)})w_{glo} & \leq C\|w_{glo}\|_{s(S_{k}\backslash S_{k-1})} \Big(\|w_{glo}\|_{s(S_{k}\backslash S_{k-1})}+\|w_{glo}\|_{c(S_{k}\backslash S_{k-1})}\Big)\\
 & \leq C(1+\Lambda^{-1})^{\frac{1}{2}}\|w_{glo}\|_{V(S_{k}\backslash S_{k-1})}\|w_{glo}\|_{s(S_{k}\backslash S_{k-1})}.
\end{align*}
Secondly, we will estimate the term $T_2$. We have
\begin{align*}
 & -\cfrac{1}{2}\Big(\int_{S_{l}\backslash S_{l-1}}(b\cdot\nabla w_{glo})(1-\chi^{(k,k-1)})w_{glo}-\int_{S_{l}\backslash S_{l-1}} b\cdot\nabla\Big((1-\chi^{(k,k-1)})w_{glo}\Big)w_{glo}\Big)\\
\leq & C\|w_{glo}\|_{s(S_{k}\backslash S_{k-1})}\|\cfrac{|b|}{\kappa^{\frac{1}{2}}}w_{glo}\|_{L^{2}(S_{k}\backslash S_{k-1})}+\|(\cfrac{|b|^{2}}{\kappa})^{\frac{1}{2}}w_{glo}\|_{L^{2}(S_{k}\backslash S_{k-1})}\|\tilde{\kappa}^{\frac{1}{2}}w_{glo}\|_{L^{2}(S_{k}\backslash S_{k-1})}\\
\leq & C\Big(\|w_{glo}\|_{s(S_{k}\backslash S_{k-1})}+\|w_{glo}\|_{s(S_{k}\backslash S_{k-1})}\Big)\|w_{glo}\|_{c(S_{k}\backslash S_{k-1})}.
\end{align*}
Finally, we will estimate the term $T_3$. We have
\begin{align*}
-\sum_{K_{i}\subset S_{k}\backslash S_{k-1}}c^{(i)}(\pi w_{glo},\pi\Big((1-\chi^{(k,k-1)})w_{glo}\Big)) & \leq\|\pi w_{glo}\|_{c(S_{k}\backslash S_{k-1})}\|(1-\chi^{(k,k-1)})w_{glo}\|_{c(S_{k}\backslash S_{k-1})}\\
 & \leq C(1+\Lambda^{-1})^{\frac{1}{2}}\|\pi w_{glo}\|_{c(S_{k}\backslash S_{k-1})}\|w_{glo}\|_{V(S_{k}\backslash S_{k-1})}.
\end{align*}
Combining the above results, we have
\begin{equation}
\label{eq:analysis2}
\|w_{glo}\|_{V(\Omega\backslash S_{k})}^{2}\leq C(1+\Lambda^{-1})\|w_{glo}\|_{V(S_{k}\backslash S_{k-1})}^{2}.
\end{equation}

{\bf Step 3}: In this step, we will prove that
$\|w_{glo}\|_{V(\Omega\backslash S_{k})}\leq(1+C^{-1}(1+\Lambda^{-1})^{-1})^{-1}\|w_{glo}\|_{V(S_{k}\backslash S_{k-1})}.$
Indeed, we have
\begin{equation}
\label{eq:analysis3}
\begin{split}
\|w_{glo}\|_{V(\Omega\backslash S_{k-1})}^{2} & =\|w_{glo}\|_{V(\Omega\backslash S_{k})}^{2}+\|w_{glo}\|_{V(S_{k}\backslash S_{k-1})}^{2}\\
 & \geq(1+C^{-1}(1+\Lambda^{-1})^{-1})\|w_{glo}\|_{V(\Omega\backslash S_{k})}^{2}
\end{split}
\end{equation}

{\bf Step 4}: In this step, we will prove the required estimate.
Using (\ref{eq:analysis1}), (\ref{eq:analysis2}) and (\ref{eq:analysis3}), we have
\begin{align*}
\|w_{glo}-w_{ms}\|_{V}^{2} & \leq C(1+\Lambda^{-1})\|w_{glo}\|_{V(\Omega\backslash S_{l-1})}^{2}\\
 & \leq C(1+\Lambda^{-1})(1+C^{-1}(1+\Lambda^{-1})^{-1})^{-(l-1)}\|w_{glo}\|_{V(\Omega\backslash S)}^{2}\\
 & \leq C(1+\Lambda^{-1})(1+C^{-1}(1+\Lambda^{-1})^{-1})^{-(l-1)}\|w_{glo}\|_{V}^{2}.
\end{align*}
\end{proof}

The following is the main result of this section. It states that our test space gives a stable numerical scheme.

\begin{theorem}\label{thm:stab}
Assume that $N_{d}^{\frac{1}{2}}CED < 1$.
For any given $u\in V_{ms}$, there exists a function $w\in W_{ms}$ such that
\[
\cfrac{1-N_{d}^{\frac{1}{2}}CED}{1+N_{d}^{\frac{1}{2}}CED}\, \|u\|_{V}\leq\cfrac{a(u,w)}{\|w\|_{W}}
\]
where $E = C\Big(1+\Lambda^{-1}\Big) \Big(1+C^{-1}(1+\Lambda^{-1})^{-1}\Big)^{-(l-1)}$ is the same constant in Lemma~\ref{lem:decay},
$N_d$ is the maximum of numbers of coarse grid vertices and cells and
$D = \kappa_0^{-1} \max\{ \kappa^{-1} |b|^2, \tilde{\kappa} \}$.
\end{theorem}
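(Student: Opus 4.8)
The strategy is to transport the perfect inf--sup stability of the global test space, proved in Lemma~\ref{lem:glo}, onto the localized space $W_{ms}$, with the loss quantified by the exponential decay of Lemma~\ref{lem:decay}. Fix $u\in V_{ms}$ and apply Lemma~\ref{lem:glo}: there is a unique $w_{glo}\in W_{glo}$ with $a(v,w_{glo})=(v,u)_V$ for all $v\in V$, so in particular $a(u,w_{glo})=\|u\|_V^2$ and $\|w_{glo}\|_W=\|u\|_V$. Following the construction in the proof of Lemma~\ref{lem:glo}, this $w_{glo}$ decomposes into components attached to single coarse regions: the pieces $\eta_i+\xi_i\in W_{glo}^2$ (one per coarse vertex) together with the $W_{glo}^1$ correction supplied by Lemma~\ref{lem:W1}. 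I define the candidate $w\in W_{ms}$ by replacing every global component with its localized counterpart, i.e.\ $\eta_i+\xi_i$ by $\eta_{i,ms}+\xi_i\in W_{ms}^2$ and each global $W_{glo}^1$ function by the corresponding $\psi_{j,ms}^{(i)}\in W_{ms}^1$.

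The theorem then reduces to a single estimate. Indeed, $a(u,w)=a(u,w_{glo})-a(u,w_{glo}-w)=\|u\|_V^2-a(u,w_{glo}-w)$, and directly from the definition $\|z\|_W=\sup_{v\in V}a(v,z)/\|v\|_V$ we have $a(u,w_{glo}-w)\le\|u\|_V\,\|w_{glo}-w\|_W$. Hence, once we prove
\[
\|w_{glo}-w\|_W\le N_d^{1/2}\,CED\,\|u\|_V,
\]
we obtain $a(u,w)\ge(1-N_d^{1/2}CED)\|u\|_V^2$ and, by the triangle inequality, $\|w\|_W\le\|w_{glo}\|_W+\|w_{glo}-w\|_W\le(1+N_d^{1/2}CED)\|u\|_V$; dividing gives exactly the claimed lower bound on $a(u,w)/\|w\|_W$.

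It remains to establish the displayed bound, and here the three factors $E$, $N_d^{1/2}$ and $D$ enter. The key observation is that each global/local component pair solves precisely the two problems of Lemma~\ref{lem:decay}: the same equation $a^*(\cdot,v)+c(\pi(\cdot),\pi(v))=L(v)$ posed once on $V$ and once on the oversampled region ($\omega_i^+$ or $K_i^+$), with a right-hand side $L$ that vanishes on test functions supported away from the associated coarse region. Lemma~\ref{lem:decay} therefore controls each component error in the $V$-norm through its decay factor. Using the norm equivalence $\|z\|_W\le C\|z\|_V$ (from boundedness of $a$ together with the domination of $\kappa^{-1}|b|^2$ by the $c$-weight), the finiteness of the number of components (at most $N_d$) via a Cauchy--Schwarz estimate that produces $N_d^{1/2}$, and bounds on the source functionals $(q_i,v)_V-a(v,\xi_i)$ and $c(\phi^{(i)}_j,\pi(v))$ against $\|u\|_V$---obtained by testing the defining equations for $\eta_i$ and the $\psi^{(i)}_j$ against themselves and exploiting that $b$ is divergence free, so that $a^*(\eta_i,\eta_i)=s(\eta_i,\eta_i)$---which introduce $D=\kappa_0^{-1}\max\{\kappa^{-1}|b|^2,\tilde\kappa\}$, one assembles the displayed estimate with $C$ generic. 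I expect the main obstacle to be exactly this bookkeeping: confirming that the coupled construction of $w_{glo}$ breaks into per-region components whose right-hand sides are genuinely localized---particularly for the $W_{glo}^1$ correction produced through Lemma~\ref{lem:W1}, whose source depends on the $\eta_i$ and must not spoil the per-component decay---and then tracking how $C$, $D$, $E$ and the count $N_d$ combine into the stated constant.
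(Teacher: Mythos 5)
Your overall strategy is the paper's strategy, almost step for step: take the perfectly stable $w_{glo}$ from Lemma~\ref{lem:glo}, split it into components attached to single coarse regions, replace each component by its localized counterpart to form $w\in W_{ms}$, bound $\|w_{glo}-w\|_W\leq N_d^{1/2}CED\,\|u\|_V$ via Lemma~\ref{lem:decay} plus Cauchy--Schwarz (giving $N_d^{1/2}$) and the stable decomposition (\ref{eq:decomp}), and conclude with the ratio argument $a(u,w)/\|w\|_W\geq (1-N_d^{1/2}CED)\|u\|_V^2/\big((1+N_d^{1/2}CED)\|u\|_V\big)$. That reduction, and the final step, are correct as you state them and coincide with the paper's.

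However, the step you yourself flag as ``the main obstacle'' is a genuine gap, and it is exactly where the proof's real content lies. The $W_{glo}^1$ correction (the paper calls it $w_2$, solving $a^*(w_2,v)=-c(\pi w_1,v)$ for all $v\in V$, where $w_1$ carries the source $(u,\cdot)_V$) has a right-hand side supported on all of $\Omega$: it is not attached to any coarse region, so Lemma~\ref{lem:decay} --- which requires $L(v)=0$ for all $v\in V_0(\Omega\backslash S)$ with $S$ a fixed coarse region --- cannot be applied to it as a whole, and there is no canonical ``corresponding $\psi_{j,ms}^{(i)}$'' to substitute for it; your candidate $w$ is therefore not fully specified. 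The paper closes this with a specific device you do not supply: it re-expands the correction cell by cell, defining $w_2^{(j)}$ by $a^*(w_2^{(j)},v)+c(\pi w_2^{(j)},v)=c^{(j)}\big((\pi w_2-\pi w_1)|_{K_j},v\big)$ for all $v\in V$. Each such source is supported in $K_j$, so Lemma~\ref{lem:decay} applies with $S=K_j$; and the pieces sum back to $w_2$ by uniqueness, because $w_2$ itself satisfies $a^*(w_2,v)+c(\pi w_2,v)=c(\pi w_2-\pi w_1,v)$ --- note the self-referential character of the decomposition, whose sources involve $\pi w_2$ itself. Since $(\pi w_2-\pi w_1)|_{K_j}\in W_{aux}(K_j)$, the localized solutions $w_{2,ms}^{(j)}\in V_0(K_j^+)$ indeed lie in $W_{ms}^1$. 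Finally, the constant $D$ does not enter through energy identities for $\eta_i$ and $\psi_j^{(i)}$ as you suggest, but through controlling these sources: $\sum_j\|(\pi w_2-\pi w_1)|_{K_j}\|_c^2=\|\pi w_2-\pi w_1\|_c^2$ together with the chain $\|w_2\|_c\leq CD\|w_2\|_s\leq CD\|w_1\|_c\leq CD\|u\|_V$ (a weighted Poincar\'e inequality applied to the correction). Without this per-cell re-expansion and source bound, the componentwise application of Lemma~\ref{lem:decay} cannot be justified and the factor $N_d^{1/2}CED$ cannot be assembled.
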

\begin{proof}
Since $u\in V_{ms}$,
we can write $u=\sum_{i=1}^{N_c} d_{i}q_{i}$ as a linear combination of trial basis functions $\{q_i\}$ in $V_{ms}$,
where $d_i$ are scalars.
By Lemma \ref{lem:glo}, there exists a function $w_{glo}\in W_{glo}$
such that
\[
(u,v)_{V}=a(v,w_{glo}), \quad\forall v\in V
\]
and we can write
\[
w_{glo}=w_{1}+w_{2}
\]
 where
\begin{align*}
a^{*}(w_{1},v)+c(\pi w_{1},v) & =(u,v)_{V}, \quad\forall v\in V, \\
a^{*}(w_{2},v) & =-c(\pi w_{1},v), \quad\forall v\in V.
\end{align*}
Next, we define $w_{1}^{(i)}\in V,\; w_{2}^{(j)}\in V$ such that
\begin{align*}
a^{*}(w_{1}^{(i)},v)+c(\pi w_{1}^{(i)},v) & =(d_{i}q_{i},v)_{V}, \quad\forall v\in V, \\
a^{*}(w_{2}^{(j)},v)+c(\pi w_{2}^{(j)},v) & =c^{(j)}((\pi w_{2}-\pi w_{1})|_{K_{j}},v), \quad\forall v\in V
\end{align*}
and $w_{1,ms}^{(i)}\in W_{ms},\; w_{2,ms}^{(j)}\in W_{ms}$ be the corresponding localized functions defined by
\begin{align*}
a^{*}(w_{1,ms}^{(i)},v)+c(\pi w_{1,ms}^{(i)},v) & =(d_{i}q_{i},v)_{V}, \quad\forall v\in V_{0}(\omega_{i}^{+}), \\
a^{*}(w_{2,ms}^{(j)},v)+c(\pi w_{2,ms}^{(j)},v) & =c^{(j)}((\pi w_{2}-\pi w_{1})|_{K_{j}},v), \quad\forall v\in V_{0}(K_{j}^{+}).
\end{align*}
Clearly, we have $w_{glo}=\sum_{i}w_{1}^{(i)}+\sum_{j}w_{2}^{(j)}$,
where the index $i$ corresponds to coarse vertices and the index $j$ corresponds to coarse cells.
We take $w=\sum_{i}w_{1,ms}^{(i)}+\sum_{j}w_{2,ms}^{(j)}\in W_{ms}$. Then we have the following
\begin{equation}
\label{eq:conv1}
\begin{split}
\|u\|_{V}^{2} & =a(u,w_{glo})=a(u,w_{glo}-w)+a(u,w)\\
 & =\sum_i a(u,w_{1}^{(i)}-w_{1,ms}^{(i)})+\sum_j a(u,w_{2}^{(j)}-w_{2,ms}^{(j)})+a(u,w).
\end{split}
\end{equation}
Notice that, using Lemma~\ref{lem:decay},
the first two terms on the right hand side of (\ref{eq:conv1}) can be estimated as follows
\begin{align*}
a(u,w_{1}^{(i)}-w_{1,ms}^{(i)}) & \leq C\|u\|_{V}\, \|w_{1}^{(i)}-w_{1,ms}^{(i)}\|_{V} \\
 & \leq CE\|u\|_{V} \, \|d_{i}q_{i}\|_{V}
\end{align*}
and
\begin{align*}
a(u,w_{2}^{(j)}-w_{2,ms}^{(j)}) & \leq C\|u\|_{V} \, \|w_{2}^{(j)}-w_{2,ms}^{(j)}\|_{V} \\
 & \leq CE\|u\|_{V} \, \|(\pi w_{2}-\pi w_{1})|_{K_{j}}\|_{c}
\end{align*}
where the $c$-norm is defined as $\|w\|_c^2 = c(w,w)$.
Therefore, the first two terms on the right hand side of (\ref{eq:conv1}) can be estimated as
\begin{align*}
\sum_i a(u,w_{1}^{(i)}-w_{1,ms}^{(i)})+\sum_j a(u,w_{2}^{(j)}-w_{2,ms}^{(j)}) & \leq  N_{d}^{\frac{1}{2}}CE\|u\|_{V}\, \Big((\sum_i d_{i}^{2}\|q_{i}\|_{V}^{2})^{\frac{1}{2}}+\|(\pi w_{2}-\pi w_{1})\|_{c}\Big).
\end{align*}
Notice that, by the Poincare inequality, we have
\begin{equation*}
\|w_{2}\|_{c} \leq C D \|w_2\|_s.
\end{equation*}
By the definition of $w_2$, we have $\|w_2\|_s \leq C \|w_1\|_c$, and by the definition of $w_1$, we have $\|w_1\|_c \leq C \|u\|_V$.
Furthermore, by the assumption on stable decomposition (\ref{eq:decomp}), we have $(\sum d_{i}^{2}\|q_{i}\|_{V}^{2})^{\frac{1}{2}}\leq C_s\|\sum_i d_{i}q_{i}\|_V$.
Thus we have
\begin{align*}
\sum_i a(u,w_{1}^{(i)}-w_{1,ms}^{(i)})+\sum_j a(u,w_{2}^{(j)}-w_{2,ms}^{(j)}) & \leq N_{d}^{\frac{1}{2}} CE \|u\|_V \, \Big(\|\sum_i d_{i}q_{i}\|_V+\|\pi w_{2}\|_c+\|\pi w_{1}\|_{c} \Big)\\
 & \leq N_{d}^{\frac{1}{2}} CE D \|u\|_V^2.
 \end{align*}
Similarly, for all $v\in V_{ms}$, we have
\[
a(v,w_{glo}-w)\leq N_{d}^{\frac{1}{2}} CED \|u\|_V \, \|v\|_V.
\]
Finally, we have
\begin{align*}
\cfrac{a(u,w)}{\|w\|_{W}} & \geq\cfrac{a(u,w-w_{glo})+a(u,u_{glo})}{\|w_{glo}\|_{W}+\|w-w_{glo}\|_{W}}\\
 & \geq\cfrac{1-N_{d}^{\frac{1}{2}}CED}{1+N_{d}^{\frac{1}{2}}CED}\|u\|_V
\end{align*}
for $N_{d}^{\frac{1}{2}}CED<1$. This completes the proof.
\end{proof}

In the above theorem, we assume that $N_{d}^{\frac{1}{2}}CED<1$.
This can be achieved by using large enough number of layers $l$ in the construction of oversampling layers
in the definitions of test spaces (\ref{eq:testsp1}) and (\ref{eq:testsp2}).

\section{Numerical results}
\label{sec:numer}

In this section, we will present some numerical examples to demonstrate the performance of the method. For the following example, we consider $h=1/200$ and $\Omega=[0,1]^2$.
We will show the performance by considering various coarse grid sizes and number of oversampling layers.
We notice that the number of test functions in the space $W_{ms}^1$ depends on the number of eigenfunctions selected in the auxiliary spectral problem. Thus,
we will consider various choices of this number and show that one needs to include enough eigenfunctions to obtain stability.
For the space $W_{ms}^2$, its dimension depends on the number of trial basis functions. In our simulations, we choose piecewise linear functions as our trial basis.

We next discuss some implementation details. We use $A$ to denote a fine scale discretization of the original problem (\ref{eq:original}).
We use the matrix $Q$ to represent the matrix representation of the trial basis functions, and the matrix $W$ to represent the matrix representation
of the test functions. In addition, we use the matrix $V$ to denote the matrix representation of the inner product $(\cdot,\cdot)_V$. Then
the matrix form of (\ref{eq:mixed1})-(\ref{eq:mixed2}) is given by
\begin{equation}
\label{eq:discrete}
\begin{split}
W^T A V^{-1} A^T W \vec{w} + W^T A Q \vec{u} &= W^T F \\
Q^T A^t W \vec{w} &= 0
\end{split}
\end{equation}
where $\vec{u}$ and $\vec{w}$ denote the vector representations of $u$ and $w$, and $F$ is the vector representation of $f$.
We observe that (\ref{eq:discrete}) contains the matrix $V^{-1}$.
In simulations, we will replace $V^{-1}$ by the matrix $B^{-1}$ where $B$ is the matrix representation of the inner product $c(\cdot,\cdot)$,
where $c$ is defined in (\ref{eq:c}), and can be diagonalized by mass lumping.
The motivation of this replacement is that the norm induced by $s^{(i)}$, defined in (\ref{eq:s}),
can be controlled by the norm induced by $c^{(i)}$ in the discrete case under an assumption. Notice that, by the inverse inequality, we have
\begin{equation*}
s^{(i)}(u,u) \leq C h^{-2} \Big(\max_{x\in K_i} \frac{\kappa(x)}{|b(x)|} \Big)^2 c^{(i)}(u,u),
\end{equation*}
where $h$ is the fine mesh size.
Thus, if we assume the fine mesh size satisfies $h^{-1} \Big(\max_{x\in K_i} \frac{\kappa(x)}{|b(x)|} \Big) = O(1)$,
then we have the desired replacement.

\subsection{Example 1}
In our first example, we consider a constant diffusion coefficient, that is, $\kappa=1/200$. The velocity field
is given by $b  = (\cos(18\pi y)\sin(18\pi x) , -\cos(18\pi x)\sin(18\pi y)  )$ and the source term is given by $f=1$.
The numerical results are shown in Table~\ref{table:ex1}.
In the first column, we present the number of test functions used for the space $W_{ms}^1$ per coarse element.
The second column shows the coarse mesh size,
and the third column shows the number of oversampling layers used in the constructions of test functions for both $W_{ms}^1$ and $W_{ms}^2$.
Finally, in the last column, we present the relative errors in the $V$-norm.
For comparison purpose, we show the projection errors in $V$-norm in parenthesis,
where the projection error is obtained by projecting the true solution in the trial space using the $V$-inner product.
From the results in Table~\ref{table:ex1}, we observe that the error is close to the projection error once sufficient oversampling layers are used
in the construction of test functions.

\begin{table}[!ht]
\centering
\begin{tabular}{|c|c|c|c|}
\hline
\#basis($W^1_{ms}$) & $H$ & \#layer & $V$-norm (projection error)\tabularnewline
\hline
3 & 1/10& 3 & 3.05\%(2.92\%)\tabularnewline
\hline
3 & 1/20& 4 &2.19\%(2.18\%)\tabularnewline
\hline
3 & 1/40& 5 &0.85\%(0.85\%)\tabularnewline
\hline
\end{tabular}
\caption{Numerical results for Example 1.}
\label{table:ex1}
\end{table}

\subsection{Example 2}
In our second example, we perform a similar test as in Example 1, but we use $\kappa=1/2000$
and $b=(-\partial_{y}H,+\partial xH)$, where $H=(sin(5\pi x)sin(6\pi y)/(60\pi))+0.005(x+y)$. In this case,
the Peclet number is larger than that of Example 1.
The numerical results are shown in Table~\ref{table:ex2}.
We observe similar performance as in Example 1.

\begin{table}[!ht]
\centering
\begin{tabular}{|c|c|c|c|}
\hline
\#basis($W^1_{ms}$) & $H$ & \#layer & $V$-norm (projection error)\tabularnewline
\hline
3 & 1/10& 3 & 11.79\%(11.07\%)\tabularnewline
\hline
3 & 1/20& 4 &3.25\%(3.24\%)\tabularnewline
\hline
3 & 1/40& 5 &0.69\%(0.68\%)\tabularnewline
\hline
\end{tabular}
\caption{Numerical results for Example 2.}
\label{table:ex2}
\end{table}

\subsection{Example 3}
Finally, we consider a heterogenous velocity field defined by a Darcy flow
in a high contrast medium. In particular, the velocity $b$ is defined by the following system
\begin{align*}
K^{-1}b & =-\nabla p\\
\nabla\cdot b & =q
\end{align*}
where
\[
q(x)=\begin{cases}
1 & x\in[0,\cfrac{1}{10}]\times[0,\cfrac{1}{10}]\\
-1 & x\in[\cfrac{9}{10},1]\times[\cfrac{9}{10},1]\\
0 & \text{otherwise}
\end{cases}
\]
and
\[
f(x)=\begin{cases}
1 & x\in[0,\cfrac{1}{10}]\times[0,\cfrac{1}{10}]\\
0 & \text{otherwise}
\end{cases}
\]
and the coefficient $K$ is shown in Figure~\ref{fig:kappa}, where the contrast is $10^4$. In addition, we take $\kappa=1/20$.
The numerical results are presented in Table~\ref{table:ex3}.
We observe that the solution error is very close to the projection error once a sufficient number of oversampling layers is used
in the construction of test functions.
This result confirm that our test space gives very good stability, even for high Peclet numbers.

\begin{figure}[!ht]
\centering
\includegraphics[scale=0.4]{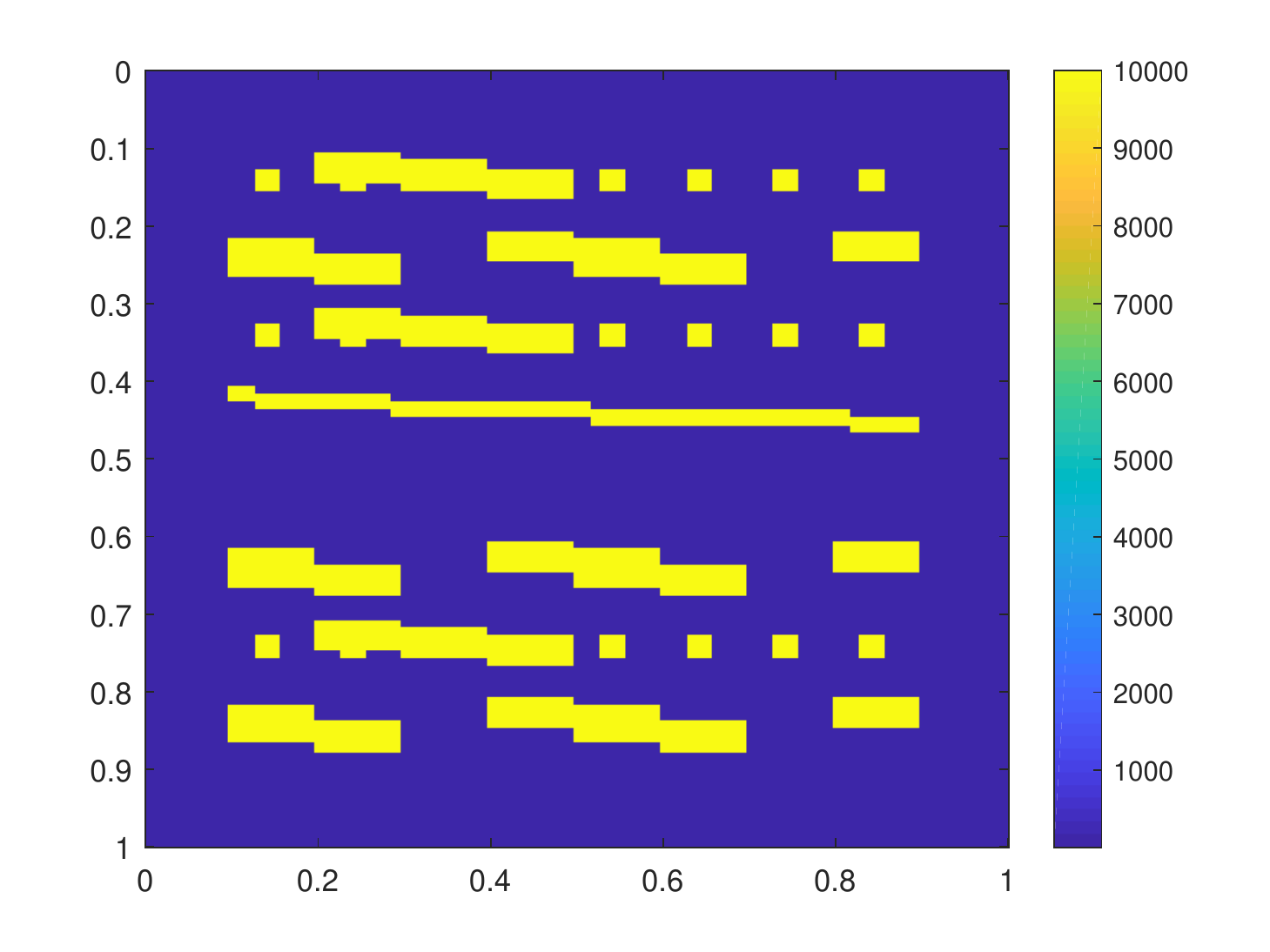}

\caption{The coefficient $K$ for Example 3.}
\label{fig:kappa}
\end{figure}

\begin{table}[!ht]
\centering
\begin{tabular}{|c|c|c|c|}
\hline
\#basis( $W^1_{ms}$) & $H$ & \#layer & $V$-norm (projection error)\tabularnewline
\hline
3 & 1/10& 3 & 16.16\%(15.86\%)\tabularnewline
\hline
3 & 1/20& 4 &4.61\%(4.59\%)\tabularnewline
\hline
3 & 1/40& 5 &1.20\%(1.20\%)\tabularnewline
\hline
\end{tabular}
\caption{Numerical results for Example 3.}
\label{table:ex3}
\end{table}

\section{Conclusion}
We have presented a new stabilization technique for multiscale convection diffusion problems.
The proposed methodology is based on the DPG idea with a suitable choice of test functions.
The construction of the test function is based on the CEM-GMsFEM approach. We show that,
once a sufficient number of oversampling layers is used, the resulting test functions have a decay property,
and give a good stability. We also present numerical results to confirm this theory.

\bibliographystyle{plain}
\bibliography{references}

\end{document}